\documentclass{amsart}
\usepackage{amsmath}
\usepackage[dvips]{graphicx}
\usepackage[usenames,dvipsnames]{xcolor}
\usepackage{colortbl}
\usepackage{multirow}
\usepackage{pgf,tikz}
\usepackage{subfig}
\usepackage{amsmath, amsthm, amscd, amsfonts, amssymb, color}
\usepackage[bookmarksnumbered, plainpages]{hyperref}
\usepackage{relsize}
\usepackage{longtable}
\newtheorem{thm}{Theorem}[section]
\newtheorem{cor}[thm]{Corollary}
\newtheorem{con}[thm]{Conjecture}
\newtheorem{lem}[thm]{Lemma}
\newtheorem{prop}[thm]{Proposition}
\newtheorem{defn}[thm]{Definition}
\newtheorem{rem}[thm]{Remark}

\numberwithin{equation}{section}
\hyphenation{op-tical net-works semi-conduc-tor}
\begin{document}
\title[Permutation Codes With Minimum Kendall $\tau$-distance of Three]{New Bounds on the Size of  Permutation Codes With Minimum Kendall $\tau$-distance of Three}
\author[Abdollahi]{A. Abdollahi}%
\address{Department of Pure Mathematics, Faculty of Mathematics and Statistics,  University of Isfahan, Isfahan 81746-73441, Iran.}%
\address{School of Mathematics, Institute for Research in Fundamental Sciences (IPM), 19395-5746 Tehran, Iran.}
\email{a.abdollahi@math.ui.ac.ir}%
\author[Bagherian]{J. Bagherian}
\address{Department of Pure Mathematics, Faculty of Mathematics and Statistics,  University of Isfahan, Isfahan 81746-73441, Iran.}%
\email{bagherian@sci.ui.ac.ir}
\author[Jafari]{F. Jafari}
\address{Department of Pure Mathematics, Faculty of Mathematics and Statistics,  University of Isfahan, Isfahan 81746-73441, Iran.}%
\email{math\_fateme@yahoo.com}
\author[Khatami]{M. Khatami}%
\address{Department of Pure Mathematics, Faculty of Mathematics and Statistics,  University of Isfahan, Isfahan 81746-73441, Iran.}%
\email{m.khatami@sci.ui.ac.ir}
\author[Parvaresh]{F. Parvaresh}
\address{Department of Electrical Engeenering,  University of Isfahan, Isfahan 81746-73441, Iran.}%
\address{School of Mathematics, Institute for Research in Fundamental Sciences (IPM), 19395-5746 Tehran, Iran.}
\email{f.parvaresh@eng.ui.ac.ir}
\author[Sobhani]{R. Sobhani}%
\address{Department of Applied Mathematics and Computer Science, Faculty of Mathematics and Statistics,  University of Isfahan, Isfahan 81746-73441, Iran.}%
\email{r.sobhani@sci.ui.ac.ir}
\thanks{Corresponding Author: A. Abdollahi}
\thanks{F. Parvaresh is supported by in part by grant No. 1401680050.}
\subjclass[2010]{94B25;  94B65; 68P30}
\keywords{Rank modulation,  Kendall $\tau$-distance, permutation codes.}

\begin{abstract}
We study  $P(n,3)$, the size of the largest
subset of the set of all permutations $S_n$ with minimum Kendall
$\tau$-distance $3$. Using a combination of group theory and integer programming, we reduced the upper bound of $P(p,3)$ from $(p-1)!-1$ to $(p-1)!-\lceil\frac{p}{3}\rceil+2\leq (p-1)!-2$ for all primes  $p\geq 11$.  In special cases where $n$ is equal to $6,7,11,13,14,15$ and $17$ we reduced the upper bound of $P(n,3)$ by $3,3,9,11,1,1$ and $4$, respectively.
\end{abstract}
\maketitle
\section{Introduction}
Rank modulation was proposed as a solution to the
challenges posed by flash memory storages\cite{jiang}. In the rank
modulation framework, codes are permutation codes, where by a permutation code (PC) of length $n$ we simply mean a non-empty subset $\mathcal{C}$ of $S_n$, the set of all permutations of $[n]:=\{1,2,\ldots ,n\}$.
 Given a permutation $\pi:=[\pi(1),\pi(2),\ldots ,\pi(i),\pi(i+1),\ldots ,\pi(n)]\in S_n$,  an adjacent transposition, $(i, i + 1)$, for some $1\leq i\leq n-1$,  applied to $\pi$ will result in the permutation $[\pi(1),\pi(2),\ldots ,\pi(i+1),\pi(i),\ldots ,\pi(n)]$. For two permutations
$\rho,\pi\in S_n$, the Kendall $\tau$-distance between $\rho$ and $\pi$,
$d_K(\rho, \pi)$, is defined as the minimum number of adjacent
transpositions needed to transform  $\rho$ into $\pi$. Under the Kendall $\tau$-distance a PC of length $n$ with minimum distance $d$ can correct up to $ \frac{d-1}{2} $ errors
caused by charge-constrained errors \cite{jiang}.

 The maximum size of a PC of length $n$ and  minimum Kendall $\tau$-distance  $d$ is denoted by $P(n,d)$. Several researchers
have presented bounds on $P(n, d)$ (see \cite{barg,BE,jiang,V,WZYG,WWYF}), some of these results are shown in Table \ref{10101}. It is known that $P(n, 1)=n!$ and $P(n,2)=\frac{n!}{2}$. Also it is known that if $\frac{2}{3}\binom{n}{2}<d\leq \binom{n}{2} $, then $P(n,d)=2$ (see \cite[Theorem 10]{BE}). However, determining $P(n, d)$ turns out to be difficult for $3<d\leq \frac{2}{3}\binom{n}{2} $. In this paper, we study the upper bound of $P(n,3)$. By sphere packing bound (see \cite[Theorems 17 and 18]{jiang})  $P(n,3)\leq (n-1)!$. It is proved that if $n>4$ is a prime number or $4\leq n\leq 10$, then $P(n,3)\neq (n-1)!$ (see \cite[Corollary 2.5  and Theorem 2.6]{white} or \cite[Corollary 2]{BE}).

\begin{table}[!hbp]
	\begin{tabular}{|c|c|c|c|c|}
		\hline
		\cellcolor{yellow!50}{$n$}&\cellcolor{yellow!50}{6}&\cellcolor{yellow!50}{7} &\cellcolor{yellow!50}{11}&\cellcolor{yellow!50}{13}\\
		\hline
		LB & 102 \cite{V} & 588 \cite{BE} & $11!/20$ \cite{barg} & $13!/24$ \cite{barg} \\
		\hline
		UB& $5!-1^{a}$ & $6!-1^{a}$ & $10!-1^{a}$ & $12!-1^{a}$ \\
		\hline
		UB & \cellcolor{black!20!white}{$5!-4$}&\cellcolor{black!20!white}{$6!-4$}&\cellcolor{black!20!white}{$10!-10$}&\cellcolor{black!20!white}{$12!-12$}\\
		\hline
		\cellcolor{yellow!50}{$n$}&\cellcolor{yellow!50}{14}&\cellcolor{yellow!50}{15} &\cellcolor{yellow!50}{17}&\cellcolor{yellow!50}{prime  $n\geq 19$ }\\
		\hline
		LB& $2\times 12!$ \cite{jiang} &$15!/28$ \cite{barg} &$2\times 15!$ \cite{jiang} &$2\times(n-2)!$ \cite{jiang}\\
		\hline
		UB& $13!$ \cite{jiang}&$14!$ \cite{jiang}&$ 16!-1^{a} $ &$(n-1)!-1^{a}$ \\
		\hline
		UB&\cellcolor{black!20!white}{$13!-1$}&\cellcolor{black!20!white}{$14!-1$}&\cellcolor{black!20!white}{$16!-5$}&\cellcolor{black!20!white}{$(n-1)!-\lceil\frac{n}{3}\rceil +2$}\\
		\hline
	\end{tabular}
	\caption{  {\small Some results on  bounds of $P(n,3)$.  The superscripts shows the references from which the upper bound is taken, where ``a" is \cite{BE,white}, and  gray color shows our main results.}}\label{10101}
\end{table}

There are several works which uses optimization techniques to bound
the size of permutation codes under various distance metrics
(Hamming, Kendall, Ulam) (see \cite{Go,tar,V}).  In  Section II of this paper, we show that for any non-trivial subgroup of $S_n$, we can derive an integer programming  where the optimal
value of the objective function  gives an upper bound on $P(n,3)$. In Section III, by considering  Young  subgroups (see Definition \ref{young1}, below) of  $S_n$,  we can 
improve the upper bound of $P(n,3)$ as shown in Table \ref{10101}.

\section{Preliminaries}
Let $G$ be a finite group and denote by $\mathbb{C}[G]$ the complex group algebra of $G$. The elements of $\mathbb{C}[G]$ are of the formal sum 
\begin{equation}
\label{elementgroupring} 
\sum_{g \in G} a_g g,
\end{equation}
where $a_g\in \mathbb{C}$.  The complex group algebra is a $\mathbb{C}$-algebra with the following addition, multiplication and scaler product:
$$\sum_{g \in G} a_g g + \sum_{g \in G} b_g g=\sum_{g \in G} (a_g+b_g) \sigma, $$
$$\big(\sum_{g \in G} a_g g \big)  \big(\sum_{g \in G} b_g g\big)=\sum_{g \in G} \big(\sum_{g=g_1 g_2}a_{g_1} b_{g_2}\big) g,$$
$$\lambda \sum_{g \in G} a_g g =\sum_{g \in G}(\lambda a_g) g,$$
where $\lambda \in \mathbb{C}$ and $\sum_{g \in G} b_g g \in \mathbb{C}[G]$. 
If $a_g=0$ for some $g$, the term $a_g g$ will be neglected in \ref{elementgroupring} and $\sum_{g \in G} a_g g$ is written as
$a_1 g_1 +\cdots+a_k g_k$, where $\{g \;|\; a_g\neq 0\}=\{g_1,\dots,g_k\}$ is non-empty and otherwise $\sum_{g \in G} a_g g$ is denoted by $0$.\\ 
 For a non-empty subset $X$ of $G$, we denote by $\widehat{X}$ the element $\sum_{x\in X} x$ of $\mathbb{C}[G]$. 
 
Let $G$ be a finite group and $S$ be a non-empty inverse closed subset of $G$ not-containing the identity element $1$ of $G$. Consider the Cayley graph $\Gamma:=Cay(G,S)$ whose vertices are elements of $G$ and two vertices $g,h$ are adjacent if $gh^{-1} \in S$. Now we have a metric $d_\Gamma$ on $G$ defined by $\Gamma$ which is the length of a shortest path between two vertices in $Cay(G,S)$. For example if $G=S_n$ and $S=\{(1,2),(2,3),\dots,(n-1,n)\}$, the metric $d_\Gamma$ is the Kendall $\tau$-metric on $S_n$. Also if $G=S_n$ and $S=T \cup T^{-1}$, where $T:=\{ (a , a+1, \ldots , b) \;|\; a<b, a,b\in [n] \}$, the metric $d_\Gamma$ is the Ulam metric on $S_n$. 
\begin{defn}
For a positive ineteger $r$ and an element $g\in G$, the ball of radius $r$ in $G$ under the metric $d_\Gamma$ is denoted by $B_r^\Gamma(g)$ defined by 
$B_r^\Gamma(g)=\{h\in G \;|\; d_\Gamma(g,h)\leq r\}$.
\end{defn}
\begin{rem}\label{ballsize}
 Note that $B_r^{\Gamma}{(g)}=(S^r \cup\{1\})g$, where $S^r:=\{s_1\cdots s_r \;|\; s_1,\dots,s_r\in S\}$. Also note that since $S$ is inverse closed, $B_r^{\Gamma}{(g)}=S^r g$ for all $r\geq 2$. It follows that $|B_r^\Gamma(g)|=|B_r^\Gamma(1)|=|S^r \cup\{1\}|$ for all $g\in G$. 
\end{rem}
\begin{prop}\label{rel}
Let $G$ be a finite group and $d_{\Gamma}$ be the metric induced by the metric $Cay(G,S)$. Then a subset $C$  of  $G$ is a code with $min\{d_{\Gamma}(x,y)\,|\,x,y\in C\}\geq 3$ if and only if there exists $Y\subset G$ such that
	\begin{equation} \label{groupring}
	\widehat{(S \cup \{1\})} \widehat{C} =\widehat{G}-\widehat{Y},
	\end{equation}  
\end{prop}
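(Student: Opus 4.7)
The plan is to interpret each side of the claimed identity combinatorially and show that the coefficient-wise nonnegativity forced by $\widehat{G}-\widehat{Y}$ (with $Y\subseteq G$) is exactly equivalent to the balls of radius $1$ around the codewords being pairwise disjoint, which is the minimum distance $\geq 3$ condition.

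First I would expand the product $\widehat{(S\cup\{1\})}\widehat{C}=\sum_{c\in C}\sum_{s\in S\cup\{1\}} sc$ in $\mathbb{C}[G]$ and observe that, for each $g\in G$, its coefficient equals
\[
\bigl|\{(s,c)\in(S\cup\{1\})\times C : sc=g\}\bigr|=\bigl|\{c\in C : g\in(S\cup\{1\})c\}\bigr|.
\]
By Remark \ref{ballsize}, $(S\cup\{1\})c=B_1^{\Gamma}(c)$, so this coefficient is the number of codewords $c\in C$ with $g\in B_1^{\Gamma}(c)$. Hence $\widehat{(S\cup\{1\})}\widehat{C}$ is $\sum_{g\in G} n_g\, g$ where $n_g=|\{c\in C: d_\Gamma(g,c)\leq 1\}|$.

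Next, I would translate the two sides of the equivalence. The right-hand side $\widehat{G}-\widehat{Y}=\sum_{g\in G\setminus Y} g$ (for $Y\subseteq G$) is a $0$-$1$ combination of group elements. Therefore the existence of such $Y$ is equivalent to $n_g\in\{0,1\}$ for every $g\in G$, with $Y$ necessarily being $\{g\in G: n_g=0\}$. On the other hand, the code $C$ has minimum Kendall (Cayley) distance at least $3$ if and only if for any distinct $c,c'\in C$ the balls $B_1^{\Gamma}(c)$ and $B_1^{\Gamma}(c')$ are disjoint (since two balls of radius $1$ meet precisely when their centres are at distance $\leq 2$, and the distance takes integer values). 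This is exactly the statement that $n_g\leq 1$ for all $g\in G$.

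Putting the two directions together completes the proof: from a code of minimum distance $\geq 3$ we define $Y:=G\setminus\bigcup_{c\in C}B_1^{\Gamma}(c)$ and obtain \eqref{groupring}; conversely, from the identity \eqref{groupring} we read off that every coefficient $n_g$ is $0$ or $1$, so the radius-$1$ balls around codewords are pairwise disjoint, giving minimum distance $\geq 3$. There is no real obstacle here — the only care needed is to use that $S$ is inverse-closed and that $1\in S\cup\{1\}$ so that the product has the ball interpretation, and to notice that $Y$ is uniquely determined as the complement of the support of $\widehat{(S\cup\{1\})}\widehat{C}$.
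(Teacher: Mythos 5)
Your proof is correct and follows essentially the same route as the paper: identify the coefficients of $\widehat{(S\cup\{1\})}\widehat{C}$ with ball-covering multiplicities, take $Y$ to be the complement of $\bigcup_{c\in C}B_1^{\Gamma}(c)$, and use disjointness of the radius-$1$ balls. If anything, your write-up is slightly more complete than the paper's, since you explicitly justify the converse direction (coefficients in $\{0,1\}$ implies distance $\geq 3$) via the fact that two radius-$1$ balls in a Cayley-graph metric intersect whenever their centres are at distance at most $2$.
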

\begin{proof}
Let $Y=G\setminus\cup_{c\in C}B_1^\Gamma(c)$. So $G=\cup_{c\in C}B_1^\Gamma(c)\cup Y$ and hence Remark \ref{ballsize} implies $\widehat{G}=\widehat{(S\cup\{1\}) C}+\widehat{Y}$.
	 Now the result follows from the fact that for any two distinct elements $c,c'$ in $C$, $(S\cup\{1\})c \cap (S\cup\{1\}) c'=\varnothing$ since otherwise $d_{\Gamma}(c,c')\leq 2$ that is a contradiction. This completes the proof. 
\end{proof}

Let $G$ be a finite group and $d_{\Gamma}$ be the metric induced by the metric $Cay(G,S)$. For a positive integer $r$, an $r$-perfect code or a perfect code of radius $r$ of $G$ under the metric $d_\Gamma$ is a subset $C$ of $G$ such that $G=\cup_{c\in C} B_r^\Gamma(c)$ and $B_r^\Gamma(c)\cap B_r^\Gamma(c')=\varnothing$ for any two distinct $c,c'\in C$. By a similar argument as the proof of Proposition \ref{rel}, it can be seen that if $C$ is an $r$-perfect code, then $\widehat{(S^r \cup \{1\})} \widehat{C} =\widehat{G}$. We note that according to Remark \ref{ballsize} $C$ is an $r$-perfect code if and only if  $|C||S^r\cup \{1\}|=|G|$. 

Let $\rho$ be any (complex) representation of a finite group $G$ of dimension $k$ for some positive integer $k$, i.e., any group homomorphism from $G$ to the general linear group $GL_k(\mathbb{C})$ of  $k\times k$ invertible matrices over $\mathbb{C}$. Then by the universal property of $\mathbb{C}[G]$, $\rho$ can be extended to an algebra homomorphism $\hat{\rho}$ from $\mathbb{C}[G]$ to the algebra $Mat_k(\mathbb{C})$ of $k\times k$ matrices over $\mathbb{C}$ such that
$g^{\hat{\rho}}=g^\rho$ for all $g\in G$. Thus the image of $\widehat{X}$ for any non-empty subset $X$ of $G$ under $\hat{\rho}$ is  the element $\sum_{x\in X}x^\rho$ of $Mat_k(\mathbb{C})$. 
In particular  by applying $\hat{\rho}$ on the equality \ref{groupring}, we obtain 	  
\begin{equation}\label{groupringmain}
	\big(\sum_{s\in S^r \cup\{1\}} s^\rho\big) \big(\sum_{c\in C}c^\rho\big)=\sum_{g\in G}g^\rho- \sum_{y\in Y }y^\rho, 
\end{equation} where
	the latter equality is between elements of $Mat_k(\mathbb{C})$. 

\begin{rem}\label{action}
Given a group $G$ and a non-empty set $X$, recall that we say $G$ acts on $X$ (from the right) if there exists a function $X\times G\rightarrow X$ denoted by $(x,g)\mapsto x^g$ for all $(x,g)\in X\times G$ if $(x^g)^h=x^{gh}$ and $x^1=x$ for all $x\in X$ and all $g,h\in G$, where $1$ denotes the identity element of $G$.  For any $x\in G$ The set 
$Stab_G(x):=\{g\in G\;|\; x^g=x\}$ is called the stabilizer of $x$ in $G$ which is a subgroup of $G$. If the action is transitive (i.e., for any two elements $x,y \in X$, there exists $g\in G$ such that $x^g=y$), all stabilizers are conjugate under the elements of $G$, more precisely $Stab_G(x)^g=Stab_G(y)$ whenever $x^g=y$, where $Stab_G(x)^g=g^{-1}Stab_G(x)g$. \\
Now suppose that $G$ acts on $X$ and $|X|=k$ is finite.  Fix an arbitrary ordering on the elements of $X$ so that $x_i<x_j$ whenever $i<j$ for distinct elements $x_i,x_j\in X$. Denote by $\rho_{X}^G$ the map from $G$ to $GL_k(\mathbb{Z})$ defined by $g\mapsto P_g$, where
$P_g$ is the $|X|\times |X|$ matrix whose $(i,j)$ entry is $1$ if $x_i^g=x_j$ and $0$ otherwise. 
Note that the definitions of $\rho_{X}^G$ depends on the choice of the ordering on $X$, however any two such representations of $G$ are conjugate by a permutation matrix.
\end{rem}

Let $H$ be a subgroup of a finite group $G$ and  $X$ be the set of  right cosets of $H$ in $G$, i.e., $X:=\{Hg\,|\, g\in G\}$. Then $G$ acts transitively on $X$ via $(Hg,g_0)\longrightarrow Hgg_0$. We note that, it is known that $X$  partitions  $G$, i.e., $G=\cup_{x\in X}{x}$ and $x\cap x'=\varnothing$ for all distinct elements $x$ and $x'$ of $X$, and $|X|=|G|/|H|$.
\begin{lem}\label{coset1}
Let $H$ be a subgroup of a finite group $G$ and  $X=\{Ha_1,\ldots ,Ha_m\}$ be the set of  right cosets of $H$ in $G$. If $\mathcal{Y}\subset G$, then by fixing the ordering  $Ha_i<Ha_j$ whenever $i<j$, the $(i,j)$ entry of  $\sum_{y\in\mathcal{Y}}y^{\rho_{X}^G}$  is $|\mathcal{Y}\cap {a_i}^{-1}Ha_j| $. 
\end{lem}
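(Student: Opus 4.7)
The plan is to unpack the definitions: first compute the $(i,j)$ entry of the permutation matrix $P_y := y^{\rho_X^G}$ for a single element $y\in G$, then linearly sum over $y\in\mathcal{Y}$. The identification of the support of the matrix-valued sum with a coset intersection is essentially a translation between the transitive action on $X$ and a membership condition in $G$.

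First, I would recall from Remark \ref{action} that for $y \in G$, the matrix $P_y$ is the $m\times m$ permutation matrix whose $(i,j)$ entry is $1$ exactly when $(Ha_i)^y = Ha_j$, and $0$ otherwise. Under the transitive right action on cosets, the condition $(Ha_i)^y = Ha_j$ means $Ha_i y = Ha_j$, which is equivalent to $a_i y a_j^{-1} \in H$, hence to $y \in a_i^{-1} H a_j$.

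Next, since taking sums of matrices is entrywise, the $(i,j)$ entry of $\sum_{y\in \mathcal{Y}} y^{\rho_X^G}$ equals $\sum_{y\in\mathcal{Y}} (P_y)_{i,j}$, which by the previous step equals the number of $y\in \mathcal{Y}$ lying in the subset $a_i^{-1}Ha_j$ of $G$. This is precisely $|\mathcal{Y}\cap a_i^{-1}Ha_j|$, proving the claim.

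There is no real obstacle here — the lemma is a bookkeeping statement, and the only mild subtlety is keeping the direction of the action consistent (the action is $(Ha_i,g)\mapsto Ha_i g$ from the right, so the condition lands in $a_i^{-1}Ha_j$ rather than $a_j H a_i^{-1}$). One should also note implicitly that $a_i^{-1}Ha_j$ depends only on the cosets $Ha_i$ and $Ha_j$ up to a trivial left-multiplication argument, so the formula is well defined regardless of the particular coset representatives chosen.
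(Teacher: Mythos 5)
Your proof is correct and follows essentially the same route as the paper's: unpack the $(i,j)$ entry of the permutation matrix $y^{\rho_X^G}$ via the equivalence $Ha_iy=Ha_j \iff y\in a_i^{-1}Ha_j$, then sum entrywise over $y\in\mathcal{Y}$. Nothing further is needed.
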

\begin{proof}
Clearly, for any $y\in \mathcal{Y}$, the $(i,j)$ entry of  $y^{\rho_{X}^G}=1$ if $Ha_iy=Ha_j$ and $0$ otherwise. So the $(i,j)$ entry of  $y^{\rho_{X}^G}=1$ if $a_i y {a_j}^{-1}\in H$ and therefore $y\in {a_i}^{-1}Ha_j$. Hence the $(i,j)$ entry of the $\sum_{y\in\mathcal{Y}}y^{\rho_{X}^G}$ is equal to $|\{y\in \mathcal{Y}\, |\,y\in {a_i}^{-1}Ha_j\}|$. This completes the proof. 
\end{proof}
\begin{thm}\label{intprogram}
Let $G$ be a finite group and $d_{\Gamma}$ be the metric induced by the metric $Cay(G,S)$. Also Let $C$ be a code in $G$ with $min\{d_{\Gamma}(c,c')\,|\,c,c'\in C\}\geq 3$. If $H$ is a subgroup of $G$ and $\mathcal{Z}$ is the set of  right cosets of $H$ in $G$, then the optimal value of the objective function of the following integer programming gives an upper bound on $|C|$
\begin{align*}\label{max}
\text{Maximize}&\quad \sum_{i=1}^{|\mathcal{Z}|}{x_i},\\
\text{subject to}&\quad \widehat{(S\cup \{1\})^{\rho_{\mathcal{Z}}^G}}(x_1,\ldots ,x_{|\mathcal{Z}|})^T \leq |H| \mathbf{1},\\
& \quad x_i\in \mathbb{Z},\,\, x_i\geq 0, \,\, i\in\{1,\ldots,|\mathcal{Z}|\},
\end{align*}
where $\textbf{1}$ is a column vector of order $|\mathcal{Z}|\times 1$ whose  entries are equal to $1$. 
\end{thm}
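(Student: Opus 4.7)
The plan is to start from the group-algebra identity in Proposition \ref{rel} and push it through the permutation representation of $G$ on right cosets of $H$, reading off the constraints from a single column of the resulting matrix equation.

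First I would apply Proposition \ref{rel} (with $r=1$) to produce a subset $Y\subset G$ with $\widehat{(S\cup\{1\})}\,\widehat{C}=\widehat{G}-\widehat{Y}$ in $\mathbb{C}[G]$. Writing $m=|\mathcal{Z}|=|G|/|H|$ and $\rho:=\rho_{\mathcal{Z}}^G$, the extension $\hat{\rho}$ of $\rho$ to an algebra homomorphism $\mathbb{C}[G]\to \mathrm{Mat}_m(\mathbb{C})$ yields $\widehat{(S\cup\{1\})}^{\hat\rho}\,\widehat{C}^{\hat\rho}=\widehat{G}^{\hat\rho}-\widehat{Y}^{\hat\rho}$. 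By Lemma \ref{coset1} the $(i,j)$ entry of $\widehat{G}^{\hat\rho}$ equals $|G\cap a_i^{-1}Ha_j|=|H|$, so $\widehat{G}^{\hat\rho}$ is the $m\times m$ matrix with every entry $|H|$; and the $(i,j)$ entry of $\widehat{Y}^{\hat\rho}$ equals $|Y\cap a_i^{-1}Ha_j|\geq 0$. Consequently every entry of $\widehat{(S\cup\{1\})}^{\hat\rho}\,\widehat{C}^{\hat\rho}$ is at most $|H|$.

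Next I would fix any column index $j$ and set $\mathbf{x}=(x_1,\ldots ,x_m)^T$ to be the $j$-th column of $\widehat{C}^{\hat\rho}$, so by Lemma \ref{coset1} $x_i=|C\cap a_i^{-1}Ha_j|\in \mathbb{Z}_{\geq 0}$. Reading off the $j$-th column of the matrix equation gives $\widehat{(S\cup\{1\})}^{\hat\rho}\mathbf{x}\leq |H|\mathbf{1}$ componentwise, the slack on the $i$-th coordinate being the nonnegative quantity $|Y\cap a_i^{-1}Ha_j|$. Hence $\mathbf{x}$ is a feasible point of the integer program.

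The step that requires the most care is identifying $\sum_{i=1}^{m}x_i$ with $|C|$. For this I would argue that $\{a_i^{-1}Ha_j\}_{i=1}^m$ is a partition of $G$: each set has size $|H|$, their total size is $m|H|=|G|$, and if $a_i^{-1}Ha_j=a_k^{-1}Ha_j$ then $a_ka_i^{-1}\in H$, so $Ha_i=Ha_k$ and hence $i=k$. Therefore $\sum_{i=1}^m x_i=|C|$, so $\mathbf{x}$ is a feasible integer solution with objective value $|C|$, and the optimum of the integer program is at least $|C|$, yielding the desired upper bound on $|C|$.
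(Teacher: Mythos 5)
Your proposal is correct and follows essentially the same route as the paper: apply Proposition \ref{rel}, push the identity through $\rho_{\mathcal{Z}}^G$, use Lemma \ref{coset1} to compute the entries, and read off a column of $\widehat{C}^{\hat\rho}$ as a feasible integer point whose coordinates sum to $|C|$. The only (harmless) difference is that you take an arbitrary column $j$ and verify explicitly that the sets $a_i^{-1}Ha_j$ partition $G$, whereas the paper fixes $a_1=1$ and uses the first column; your version is in fact slightly more careful on this point.
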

\begin{proof}
By Proposition \ref{rel}, there exists $Y\subset G$ such that
\begin{equation}
	\big(\sum_{s\in S \cup\{1\}} s^{\rho_{\mathcal{Z}}^G}\big) \big(\sum_{c\in C}c^{\rho_{\mathcal{Z}}^G}   \big)=\sum_{g\in G}g^{\rho_{\mathcal{Z}}^G}- \sum_{y\in Y }y^{\rho_{\mathcal{Z}}^G}, 
	\end{equation}
	Suppose that $\mathcal{Z}=\{Ha_1,\ldots ,Ha_m\}$. Without loss of generality, we may assume that $a_1=1$. We fix the ordering $Ha_i<Ha_j$ whenever $i<j$.
	By Lemma \ref{coset1}, the $(i,j)$ entry of $\sum_{g\in G}g^{\rho_{\mathcal{Z}}^G}$ is equal to $|G \cap {a_i}^{-1}Ha_j|$ and  since  $ {a_i}^{-1}Ha_j\subseteq G$, the $(i,j)$ entry of $\sum_{g\in G}g^{\rho_{\mathcal{Z}}^G}$ is equal to $|{a_i}^{-1}Ha_j|=|H|$, for all $i,j\in \{1,\ldots ,|\mathcal{Z}|\}$. So if $B$ is a column of $\sum_{g\in G}g^{\rho_{\mathcal{Z}}^G} $, then $B=|H| \mathbf{1}$. Let $\mathcal{C}$ be the first column of $\sum_{c\in C}c^{\rho_{\mathcal{Z}}^G}$. Then  Lemma \ref{coset1} implies that for all $1\leq i\leq |\mathcal{Z}|$,  $i$-th row of $\mathcal{C}$, denoted by $c_i$, is equal to $|C\cap Ha_i|$. Since $C=C\cap G=\cup_{i=1}^{|\mathcal{Z}|} (C\cap Ha_i)$ and $(C\cap Ha_i)\cap (C\cap Ha_j)=\varnothing$ for all $i\neq j$, $\sum_{i=1}^{|\mathcal{Z}|}c_i=|C|$. We note that by Lemma \ref{coset1}, all entries of matrix $\widehat{F^{\rho_{\mathcal{Z}}^G}}$, $F\in \{C,G,Y,(S\cup \{1\})\}$, are integer and non-negative. Therefore $\mathcal{C}$ is an integer solution for the following system of inequalities 
	\[\widehat{(S\cup \{1\})^{\rho_{\mathcal{Z}}^G}}(x_1,\ldots ,x_{|\mathcal{Z}|})^T \leq |H| \mathbf{1}
	\]
	such that $\sum_{i=1}^{|\mathcal{Z}|}c_i=|C|$ and this completes the proof.
\end{proof}

\section{Results}
Let $G=S_n$ and $S=\{(i,i+1)\,|\,1\leq i\leq n-1\}$, then the metric induced by $Cay(G,S)$ on $S_n$ is the Kendall $\tau$- metric. In this section, by using the result in Section II, we improve the upper bound of $P(n,3)$ when  $n\in \{6,14,15\} $ or $n\geq 7$ is a prime number.

Usual traditional with well-developed candidate for $\mathcal{B}$ is the set of Young tabloids of a given shape which we are going to recall them \cite{JK}. 
\begin{defn}\label{young1}
     By a number  partition $\lambda$ of $n$ (with the length $m$) we mean an $m$-tuple $(\lambda_1,\dots,\lambda_m)$ of positive integers such that $\lambda_1\geq \cdots\geq\lambda_m$ and $n=\sum_{i=1}^m \lambda_i$. Given a number partition $\lambda$ of $n$, by a Young tabloid of shape $\lambda$ we mean an $m$-tuple $(\mathfrak{n}_1,\dots,\mathfrak{n}_m)$ of non-empty subsets of $[n]$  consisting a set partition  for $[n]$ with $|\mathfrak{n}_i|=\lambda_i$ for all $i=1,\dots,m$. We denote by $\mathcal{YT}_n(\lambda)$ the set of all Young tabloids of shape $\lambda$ of $n$. With a Young tabloid $\mathfrak{n}=(\mathfrak{n}_1,\dots,\mathfrak{n}_m)$ of shape $\lambda$, we associate a Young subgroup $S_{\mathfrak{n}}$ of $S_n$  by taking
 $S_\mathfrak{n}=S_{\mathfrak{n}_1}\times \cdots\times S_{\mathfrak{n}_m}$. 
\end{defn}
\begin{rem}\label{young}
 The action  of $S_n$ on $\mathcal{YT}_n(\lambda)$ is defined by $(\mathfrak{n}_1,\dots,\mathfrak{n}_m)^\sigma=(\mathfrak{n}_1^\sigma,\dots,\mathfrak{n}_m^\sigma)$ for all $\sigma \in S_n$. Fix an arbitrary ordering  of the elements of $\mathcal{YT}_n(\lambda)$.
The representation $\rho_{\mathcal{YT}_n(\lambda)}^{S_n}$ is equivalent to the representation $\rho_{X}^{S_n}$, where $X$ is the set of  right cosets of the Young subgroup $S_{\mathfrak{n}}$ in $S_n$ for some Young tabloid $\mathfrak{n}=(\mathfrak{n}_1,\dots,\mathfrak{n}_m)$ of shape $\lambda$. Note that if $\mathfrak{n}$ and $\mathfrak{n}'$ are two Young tabloids of the same shape $\lambda$, the corresponding Young subgroups are conjugate in $S_n$ and so the representations $\rho_{X}^{S_n}$ and $\rho_{X'}^{S_n}$, where $X$ and $X'$ are the set of  right cosets of  Young subgroups $S_{\mathfrak{n}}$ and $S_{\mathfrak{n}'}$ in $S_n$, respectively,  are equivalent (i.e.,  a matrix $U$ exists such that $U^{-1}\rho_{X}^{S_n}(\sigma)U=\rho_{X'}^{S_n}(\sigma)$ for all $\sigma \in S_n$) so that we use Young tabloid $[\{1,\ldots ,\lambda_1\},\{\lambda_1+1,\ldots ,\lambda_1+\lambda_2\},\ldots,\{n-\lambda_m+1,\ldots ,n\}]$ for considering Young subgroup corresponding to the partition $\lambda=(\lambda_1,\ldots ,\lambda_m)$,  as we are studying these representations up to equivalence.   
\end{rem}
\begin{lem}\label{matrix}
Let $H$ be  Young subgroup of $S_n$ corresponding to the partition $\lambda:=(n-1,1)$ and $X$ be the set of  right cosets of $H$ in $S_n$. If $S=\{(i,i+1)\,|\,1\leq i\leq n-1\}$,  then $ \widehat{(S\cup \{1\})^{\rho_{X}^{S_n}}} $ is a  conjugate by a permutation matrix of the following matrix
 $$\begin{pmatrix}
	n-1   &1    &0     &0     &\dots &0  \\
	1     &n-2  &1     &0     &\dots &0    \\
	0     &1    &n-2   &1     &0     &0    \\
	\vdots&\dots&\ddots&\ddots&\ddots&\vdots\\
	0     &0    &\dots &1     &n-2   &1     \\
	0     &0    &\dots &0     &1     &n-1
	\end{pmatrix}.$$ 
\end{lem}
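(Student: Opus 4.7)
The plan is to identify the coset space $X$ concretely. Since $H$ is the stabilizer in $S_n$ of the Young tabloid $(\{1,\ldots,n-1\},\{n\})$, i.e., the subgroup $S_{n-1}$ fixing the letter $n$, the right coset $H\sigma$ is uniquely determined by $n^\sigma$. Hence the map $H\sigma \mapsto n^\sigma$ is an $S_n$-equivariant bijection $X \to [n]$, and the representation $\rho_X^{S_n}$ is conjugate (by a permutation matrix depending on the chosen ordering of $X$) to the standard permutation representation of $S_n$ on $[n]$. Picking coset representatives $a_1,\ldots,a_n$ with $n^{a_k}=k$ and the ordering $Ha_1 < \cdots < Ha_n$, the matrix $\sigma^{\rho_X^{S_n}}$ becomes the literal $n \times n$ permutation matrix $P_\sigma$ of $\sigma$ acting on $[n]$.

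Next, I would expand $\widehat{(S\cup\{1\})^{\rho_X^{S_n}}} = I_n + \sum_{i=1}^{n-1} P_{(i,i+1)}$ entry-by-entry. For an off-diagonal position $(k,\ell)$ with $k\neq\ell$, only the $P_{(i,i+1)}$ terms can contribute, and $P_{(i,i+1)}$ carries a $1$ at $(k,\ell)$ precisely when $\{k,\ell\}=\{i,i+1\}$; so that entry equals $1$ if $|k-\ell|=1$ and $0$ otherwise. For a diagonal position $(k,k)$, $I_n$ contributes $1$, while each $P_{(i,i+1)}$ contributes $1$ exactly when $k\notin\{i,i+1\}$. If $k\in\{1,n\}$ then a single index $i\in\{1,\ldots,n-1\}$ is excluded, giving $1+(n-2)=n-1$; if $1<k<n$, then two indices are excluded, giving $1+(n-3)=n-2$. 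These values reproduce the tridiagonal matrix in the statement exactly.

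Finally, any other ordering of $X$ yields a matrix obtained from the one above by a simultaneous row-and-column permutation, which is precisely conjugation by a permutation matrix; this matches the qualifier ``conjugate by a permutation matrix'' in the lemma. An alternative route is to invoke Lemma \ref{coset1} directly with explicit representatives (for instance $a_k=(k,k+1,\ldots,n)$ or $a_k=(k,n)$) and count $|(S\cup\{1\})\cap a_k^{-1} H a_\ell|$ for each pair $(k,\ell)$; this amounts to the same bookkeeping. I do not anticipate any genuine obstacle in either approach: the only care needed is to keep conventions (left vs.\ right action, row vs.\ column convention for $P_\sigma$) consistent, and because the statement is only up to permutation conjugation, any fixed consistent choice works.
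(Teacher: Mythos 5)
Your proof is correct and follows essentially the same route as the paper's: the paper also reduces to a point stabilizer ($H=\mathrm{Stab}_{S_n}(1)$ rather than your $\mathrm{Stab}_{S_n}(n)$) and counts which adjacent transpositions land in each double-coset cell via Lemma \ref{coset1}, which is exactly the bookkeeping you describe. Your phrasing via the natural permutation representation $\sigma\mapsto P_\sigma$ on $[n]$ is a slightly more direct packaging of the identical computation, and the entry counts you obtain agree with the stated tridiagonal matrix.
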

\begin{proof}
Let $T:=S\cup \{1\}$. Without loss of generality we may assume that $\lambda$ is the partition $\{\{1\},\{2,\ldots ,n\}\}$ of $n$ and therefore $H=Stab_{S_n}(1)$. If $i\neq j$, then  $H(1,i)\cap H(1,j)=\varnothing$ since otherwise $(1,i,j)\in H$ that is a contradiction.  So we can let  $X=\{H(1,i)\,|\,1\leq i\leq n\}$, where we are using the convention $H(1,1):=H$. We Fixing the ordering $H(1,i)<H(1,j)$ if $i<j$. By Lemma \ref{coset1}, the $(i,j)$ entry of $ \widehat{T^{\rho_{X}^{S_n}}} $ is equal to $|T\cap (1,i)H(1,j)|$. If $i=j$, then Remark \ref{action} imply $ (1,i)H(1,i)=Stab_{S_n}(i) $ and hence $T\cap (1,i)H(1,i)=T\setminus \{(i-1,i),(i,i+1)\} $ if $2\leq i\leq n-1$, $T\cap (1,n)H(1,n)=T\setminus \{(n,n-1)\} $ and $T\cap H=T\setminus \{(1,2)\} $. Now suppose that $i\neq j$. Clearly $(1,i)(i,j)(1,j)=(i,j)$. Let $h\in H$. Then  $\sigma:=(1,i)h(1,j)=\pi(1,j,i)$, where $\pi=(1,i)h(1,i)\in Stab_{S_n}(i)$. Since $\pi(i)=i$, $\sigma(j)=i$ and therefore $\sigma$ is an transposition if and only if $h=(i,j)$. Hence, if $j=i+1$ and  $i-1$, then  $T\cap (1,i)H(1,j)$ is equal to  $ \{(i,i+1)\} $ and $ \{(i-1,i)\} $, respectively, and otherwise $T\cap (1,i)H(1,j)=\varnothing$. This completes the proof.
\end{proof}
\begin{thm}\label{systemineq}
	Let $p\geq 7$ be a prime number and consider the $p\times p$ matrix $$M=\begin{pmatrix}
	p-1   &1    &0     &0     &\dots &0  \\
	1     &p-2  &1     &0     &\dots &0    \\
	0     &1    &p-2   &1     &0     &0    \\
	\vdots&\dots&\ddots&\ddots&\ddots&\vdots\\
	0     &0    &\dots &1     &p-2   &1     \\
	0     &0    &\dots &0     &1     &p-1
	\end{pmatrix}.$$ Consider the system of inequalities $M{\mathbf x}\leq (p-1)! \mathbf{1}$ with $\mathbf{x}\geq \mathbf{0}$ and $x_i$ are integers. Let $x_{\max}:=\max\{x_i\;|\; i=1,\dots,p\}$. Then
\begin{enumerate}
\item $|\{i\in[p] \;|\; x_i\leq \frac{(p-1)!}{p}\}|\geq \lceil\frac{p}{3}\rceil$.
\item If $\sum_{i=1}^p x_i=(p-1)!-k$, then $|\{i \;|\; x_i=x_{\max}\}|\geq p-k-2$.
\item  $\sum_{i=1}^p x_i \leq (p-1)!-\lceil \frac{p}{3}\rceil+2$	
\end{enumerate} 
\end{thm}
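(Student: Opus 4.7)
The plan is to prove parts~(1) and~(2) separately and then deduce (3) by combining them. Throughout I will use the consequence of Wilson's theorem $(p-1)!\equiv -1\pmod p$: writing $q:=\lfloor (p-1)!/p\rfloor$, one has $pq=(p-1)!-(p-1)$, so $(p-1)!/p$ is not an integer.

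For part~(1), the key observation is that each row of $M$ is supported on a window of two (rows $1$ and $p$) or three (interior) consecutive columns, with nonzero entries summing to $p$. If every $x_j$ indexed by such a window were strictly greater than $(p-1)!/p$, the row value would exceed $p\cdot (p-1)!/p=(p-1)!$, contradicting the system. Hence $B:=\{i:x_i>(p-1)!/p\}$ contains no window $\{i-1,i,i+1\}$ for $2\le i\le p-1$ and neither $\{1,2\}$ nor $\{p-1,p\}$. Setting $y_i:=\mathbf{1}[i\in B]$ and summing the $p$ induced inequalities so that each interior index is counted three times, one obtains $3|B|-y_1-y_p\le 2p-2$, whence $|B|\le\lfloor 2p/3\rfloor$ and therefore $|\{i:x_i\le (p-1)!/p\}|\ge\lceil p/3\rceil$.

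For part~(2), set $T:=\{i:x_i=x_{\max}\}$ and $N:=p-|T|$. If $x_{\max}\le q$, then $\sum x_i\le pq=(p-1)!-(p-1)$, forcing $k\ge p-1$ and giving $N\le p\le k+2$ trivially. Otherwise $x_{\max}\ge q+1$; substitute $\delta_i:=x_{\max}-x_i\ge 0$, so that $\sum\delta_i=p\,x_{\max}-(p-1)!+k=a+k$ where $a:=p\,x_{\max}-(p-1)!\ge 1$, and the original constraints become $\delta_{i-1}+(p-2)\delta_i+\delta_{i+1}\ge a$ for interior rows, $(p-1)\delta_1+\delta_2\ge a$, and $\delta_{p-1}+(p-1)\delta_p\ge a$. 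Pick any $i_0\in T$; since $\delta_{i_0}=0$, the $i_0$-th row yields $\delta_{i_0-1}+\delta_{i_0+1}\ge a$ (or the appropriate one-sided version if $i_0\in\{1,p\}$). Subtracting this from $\sum\delta_i=a+k$ leaves $\sum_{j\notin\{i_0-1,i_0,i_0+1\}}\delta_j\le k$; since each nonzero $\delta_j$ is a positive integer, at most $k$ of these outer indices have $\delta_j>0$, while at most two of $\delta_{i_0-1},\delta_{i_0+1}$ are nonzero, giving $N\le k+2$.

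Part~(3) now follows quickly. If $x_{\max}\le q$, then $\sum x_i\le pq\le (p-1)!-\lceil p/3\rceil+2$ already (easy for $p\ge 5$). Otherwise $x_{\max}>q$, so $T$ and $A:=\{i:x_i\le (p-1)!/p\}$ are disjoint subsets of $[p]$; combining $|T|\ge p-k-2$ from part~(2) with $|A|\ge\lceil p/3\rceil$ from part~(1) yields $(p-k-2)+\lceil p/3\rceil\le p$, i.e.\ $k\ge\lceil p/3\rceil-2$, hence $\sum x_i\le (p-1)!-\lceil p/3\rceil+2$. The main obstacle is part~(2): one has to notice that the substitution $\delta_i=x_{\max}-x_i$ preserves the tridiagonal structure, and that a single row indexed by any $i_0\in T$ already absorbs the ``large'' portion $a$ of the total defect budget $a+k$, leaving only $k$ units distributed among the remaining positions as nonzero integer defects.
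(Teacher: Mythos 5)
Your proposal is correct and follows essentially the same route as the paper: each row's coefficients sum to $p$ (so every window contains an index with $x_i\leq (p-1)!/p$), the row at a maximizer bounds the number of non-maximal indices by $k+2$, and (1)+(2) are combined with Wilson's theorem to force $k\geq \lceil p/3\rceil-2$. The only cosmetic differences are in (1), where you sum all $p$ window constraints instead of the paper's disjoint blocks $\{1,2\},\{3,4,5\},\dots$ (both give exactly $\lceil p/3\rceil$), and in (3), where your case split on $x_{\max}$ versus $\lfloor (p-1)!/p\rfloor$ is just the direct form of the paper's argument by contradiction via $\mathcal{A}\cap\mathcal{B}\neq\varnothing$.
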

\begin{proof}
Let $\mathcal{A}:=\{i\in[p] \;|\; x_i\leq \frac{(p-1)!}{p}\}$ and $\mathcal{B}:=\{i \;|\; x_i=x_{\max}\}$. Consider the partition $\{\{1,2\},\{3,4,5\},\{6,7,8\},\dots,\{p-2,p-1,p\}\}$ of $[p]$ if $p\equiv 2 \mod 3$ and the partition $\{\{1,2\},\{3,4,5\},\{6,7,8\},\dots,\{p-4,p-3,p-2\},\{p-1,p\}\}$ if $p\equiv 1 \mod 3$.  Each  member of partitions corresponds to an obvious inequality, e.g.
$\{1,2\}$ and $\{p-2,p-1,p\}$ are respectively corresponding to  $(p-1)x_1+x_2\leq (p-1)!$ and  $x_{p-2}+(p-2)x_{p-1}+x_p\leq (p-1)!$. Each  inequality corresponding to a member $P$ of partitions forces $x_i\leq (p-1)!/p$ for some $i\in P$, where $x_i=\min\{x_j\;|\; j\in P\}$. Since the size of both partitions is the same $\lceil \frac{p}{3}\rceil$, $|\mathcal{A}|\geq \lceil\frac{p}{3}\rceil$. 

Let $\ell\in [p]$ be such that $x_\ell=x_{\max}$. Thus $\sum_{i=1, i\neq \ell-1,\ell+1}^p (x_{\ell} -x_i)=x_{\ell-1}+(p-2)x_{\ell}+x_{\ell+1}-\sum_{i=1}^{p}x_i\leq (p-1)!-((p-1)!-k)$.
 Thus $\sum_{i=1, i\neq \ell-1,\ell+1}^p (x_\ell -x_i) \in \{0,1,\dots,k\}$. It follows that $|\{i \;|\; x_i< x_{\max}\}|\leq k+2$ and so  $|\mathcal{B}|\geq p-k-2$.

Let $\sum_{i=1}^p x_i=(p-1)!-k$ and suppose, for a contradiction, that $k<\lceil \frac{p}{3} \rceil -2$. So $|\mathcal{B}|\geq p-\lceil \frac{p}{3} \rceil+1$ and therefore
$$|\mathcal{A}\cap \mathcal{B}|\geq |\mathcal{A}|+|\mathcal{B}|-p\geq \lceil \frac{p}{3} \rceil+p-\lceil \frac{p}{3} \rceil+1-p\geq 1. $$ 

 Hence $\mathcal{A}\cap \mathcal{B}\neq \varnothing$ and  $x_{\max}\leq (p-1)!/p$. Since $p$ is  prime, by Wilson theorem \cite[P. 27]{wilson} $(p-1)!\equiv -1 \mod p$. Since $x_{\max}$ is integer, we have that $x_i\leq \frac{(p-1)!+1}{p}-1$ for all $i\in [p]$. Therefore $$\sum_{i=1}^px_i=(p-1)!-k\leq p(\frac{(p-1)!+1}{p}-1)=(p-1)!+1-p$$ and so $$p\leq k+1<\lceil \frac{p}{3} \rceil -1,$$ which is a contradiction. So we must have $k\geq \lceil \frac{p}{3} \rceil -2$. This completes the proof.
\end{proof}
\begin{cor}
For all primes $p\geq 11$,   $P(p,3)\leq (p-1)!-\lceil \frac{p}{3}\rceil+2\leq (p-1)!-2$.
\end{cor}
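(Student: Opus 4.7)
The plan is to chain together the three main ingredients built up in the paper: the integer-programming upper bound of Theorem \ref{intprogram}, the explicit computation of the constraint matrix in Lemma \ref{matrix}, and the combinatorial bound of Theorem \ref{systemineq}(3). Specifically, I would take $G = S_p$ with generating set $S = \{(i,i+1) \mid 1 \leq i \leq p-1\}$ so that the Cayley metric is the Kendall $\tau$-metric, and let $C \subseteq S_p$ be any code of minimum Kendall distance at least $3$. For the subgroup $H$, I would choose the Young subgroup associated with the partition $\lambda = (p-1, 1)$, i.e., $H = \mathrm{Stab}_{S_p}(1)$, so that $|H| = (p-1)!$ and the set $\mathcal{Z}$ of right cosets satisfies $|\mathcal{Z}| = p$.

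With this choice, Theorem \ref{intprogram} bounds $|C|$ by the optimum of the integer program with objective $\sum_{i=1}^{p} x_i$ and constraints $\widehat{(S \cup \{1\})^{\rho_{\mathcal{Z}}^{S_p}}}(x_1, \ldots, x_p)^T \leq (p-1)!\,\mathbf{1}$, $x_i \in \mathbb{Z}_{\geq 0}$. By Lemma \ref{matrix}, the constraint matrix equals $P^{-1} M P$ for some permutation matrix $P$, where $M$ is the explicit tridiagonal matrix of Theorem \ref{systemineq}. Since permutation-conjugation merely relabels the indices, and both the objective $\sum x_i$ and the right-hand side $(p-1)!\,\mathbf{1}$ are invariant under permuting coordinates, the integer program has the same optimum as the system $M\mathbf{x} \leq (p-1)!\,\mathbf{1}$ studied in Theorem \ref{systemineq}. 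Applying part (3) of that theorem gives $|C| \leq (p-1)! - \lceil p/3 \rceil + 2$, and hence $P(p,3) \leq (p-1)! - \lceil p/3 \rceil + 2$.

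The final step is purely arithmetic: the second inequality $(p-1)! - \lceil p/3 \rceil + 2 \leq (p-1)! - 2$ is equivalent to $\lceil p/3 \rceil \geq 4$, which is exactly where the hypothesis $p \geq 11$ enters (for $p = 11$ one has $\lceil 11/3 \rceil = 4$, while $p = 7$ would only yield $\lceil 7/3 \rceil = 3$ and thus the weaker bound $(p-1)!-1$ already known in the literature). There is essentially no obstacle at the level of this corollary, since all substantive work has been absorbed into the earlier results; the only point requiring care is the transparent observation that a simultaneous row/column permutation of the constraint matrix neither changes feasibility (given symmetric right-hand side) nor the value of the objective. The genuinely delicate step in the whole chain is Theorem \ref{systemineq}(3) itself, where the partition-into-triples pigeonhole argument is sharpened via Wilson's theorem $(p-1)! \equiv -1 \pmod{p}$ to rule out the extremal case.
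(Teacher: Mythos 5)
Your proposal is correct and follows exactly the route the paper takes: its proof of this corollary is a one-line citation of Theorem \ref{intprogram}, Lemma \ref{matrix}, and Theorem \ref{systemineq}, instantiated with the Young subgroup for the partition $(p-1,1)$ just as you do. Your added remarks on the harmlessness of permutation-conjugation and the arithmetic $\lceil p/3\rceil\geq 4$ for $p\geq 11$ simply make explicit what the paper leaves implicit.
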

\begin{proof}
The result follows from Theorems \ref{intprogram}, \ref{systemineq} and Lemma \ref{matrix}.
\end{proof}
\begin{thm}
If $n$ is equal to $6$, $7$, $11$, $13$ and $17$, then $P(n,3)$ is less than or equal to $116$, $716$, $10!-10$, $12!-12$  and $16!-5$, respectively.
\end{thm}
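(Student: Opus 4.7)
The plan is to apply Theorem \ref{intprogram} with a suitably chosen Young subgroup $H$ of $S_n$ in each case and then solve or tightly analyze the resulting integer program.

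For the primes $p \in \{11, 13, 17\}$, I would take $H$ to be the Young subgroup associated with the partition $(p-1, 1)$, so that by Lemma \ref{matrix} the constraint matrix is the explicit tridiagonal matrix $M$ of Theorem \ref{systemineq}, and Theorem \ref{intprogram} bounds $P(p, 3)$ by $\max \sum_{i=1}^{p} x_i$ subject to $M \mathbf{x} \leq (p-1)! \mathbf{1}$ with non-negative integer $x_i$. Theorem \ref{systemineq}(3) gives the generic bound $(p-1)! - \lceil p/3 \rceil + 2$, which for these small primes is not yet sharp. The strategy is to push the proof of Theorem \ref{systemineq} further by cascading the deficit along the tridiagonal: the proof already shows that if $\sum x_i = (p-1)! - k$, then at least $p - k - 2$ coordinates equal $x_{\max}$ while at least $\lceil p/3 \rceil$ lie below $(p-1)!/p$, so for small $k$ these sets overlap and Wilson's theorem forces $x_{\max} \leq ((p-1)!+1)/p - 1$. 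Combining this with the two boundary rows $(p-1)x_1 + x_2 \leq (p-1)!$ and $x_{p-1} + (p-1)x_p \leq (p-1)!$ (which are strictly tighter than an interior row) and propagating: once one coordinate drops strictly below $x_{\max}$, its neighbors are forced strictly below as well, so the cumulative deficit grows in chunks of at least two along the chain. Carrying this cascade out for each of $p = 11, 13, 17$ should yield the claimed $10! - 10$, $12! - 12$, $16! - 5$.

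For $n = 7$, the generic prime bound is only $6! - 1 = 719$, essentially the sphere-packing bound, so the $(6,1)$ partition alone cannot produce $716$. My plan here is either to run the explicit $7 \times 7$ IP from the $(6,1)$ partition in near-optimal form, exploiting $6! \equiv -1 \pmod 7$ one level deeper, or to apply Theorem \ref{intprogram} to a richer Young subgroup of shape $(5, 1, 1)$ or $(5, 2)$. For $n = 6$ the situation is even more delicate: the $(5,1)$ partition alone gives only the sphere-packing bound $120$, since $\mathbf{x} = (20, \ldots, 20)$ is a feasible integer LP optimum of $M\mathbf{x} \leq 120 \cdot \mathbf{1}$, and Wilson's theorem is unavailable because $6$ is composite. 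Here I would take $H$ to be the Young subgroup of a finer partition such as $(4, 2)$ or $(4, 1, 1)$, giving an integer program in $|S_6|/|H| \in \{15, 30\}$ variables; this is small enough to be solved either analytically by exploiting the block structure of the resulting constraint matrix or directly by a standard integer programming routine, and the optimum should be the claimed $116$.

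The principal obstacle in each case is the final analytical or computational step. For the primes it is bookkeeping the deficit cascade forced by the tridiagonal matrix $M$, tracking precisely how many indices achieve $x_{\max}$ and how far the remaining indices must fall below. For $n = 6$ and $n = 7$ it is identifying an appropriate Young subgroup whose IP is simultaneously sharp enough to beat the known bounds and small enough to solve in closed form; a moderate amount of computer assistance is likely unavoidable in these two cases.
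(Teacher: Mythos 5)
Your overall framework is the same as the paper's: apply Theorem \ref{intprogram} to a Young subgroup and bound $P(n,3)$ by the optimum of the resulting integer program. But the substance of the theorem lies precisely in the two things your proposal leaves open --- which subgroup to use, and what the IP optimum actually is --- and on both counts there are concrete problems. The paper's proof simply solves the IPs with CPLEX for the Young subgroups of shapes $(2,2,2)$ for $n=6$, $(5,1,1)$ for $n=7$, $(9,2)$ for $n=11$, $(11,2)$ for $n=13$, and $(16,1)$ for $n=17$. You correctly guess $(5,1,1)$ for $n=7$ and $(16,1)$ for $n=17$, but for $n=11$ and $n=13$ you propose to stay with the $(p-1,1)$ partition and sharpen the tridiagonal analysis of Theorem \ref{systemineq}, whereas the paper switches to the much larger coset space of the $(p-2,2)$-type subgroups ($\binom{11}{2}=55$ and $\binom{13}{2}=78$ variables) to reach $10!-10$ and $12!-12$. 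That switch is strong evidence that the $p$-variable tridiagonal IP is simply not tight enough to give those numbers, so no amount of bookkeeping on $M\mathbf{x}\le (p-1)!\mathbf{1}$ will get you there. Similarly for $n=6$ you propose $(4,2)$ or $(4,1,1)$ (15 or 30 cosets), while the paper needs the finer $(2,2,2)$ subgroup (90 cosets) to obtain $116$.

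There is also a specific flaw in your ``cascade'' argument: the claim that once one coordinate drops strictly below $x_{\max}$ its neighbors are forced strictly below as well does not follow from the constraints. The row $x_{\ell-1}+(p-2)x_\ell+x_{\ell+1}\le (p-1)!$ constrains a weighted sum and says nothing of the sort about individual neighbors; the proof of Theorem \ref{systemineq} only extracts that the total deficit $\sum_{i\ne \ell-1,\ell+1}(x_{\max}-x_i)$ is at most $k$, which is exactly what yields the $(p-1)!-\lceil p/3\rceil+2$ bound and no more. So the gap is twofold: the propagation step you rely on is unsound, and even its intended conclusion is unreachable from the $(p-1,1)$ subgroup for $p=11,13$. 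To repair the argument you would need to adopt the paper's finer subgroups and accept that the final step is a genuine (computer-assisted) integer-programming computation rather than a closed-form analysis.
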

\begin{proof}
Let $S:=\{(i,i+1)\,|\,1\leq i\leq n-1\}$. In view of Theorem \ref{intprogram}, we have used a CPLEX software \cite{cplex} to determine the upper bound for $P(n,3)$ obtained from solving the integer programming corresponding to the subgroup $H$ of $S_n$, where $H$ is the Young subgroup corresponding to the partition $(2,2,2)$, when $n=6$, $(5,1,1)$, when $n=7$, $(9,2)$, when $n=11$, $(11,2)$, when $n=13$  and $(16,1)$, when $n=17$.
\end{proof}
\begin{thm}
There are no $1$-perfect codes under the Kendall $\tau$-distance in $S_n$ when $n\in\{14,15\}$.
\end{thm}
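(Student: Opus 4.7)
The proof will proceed by contradiction. Suppose that a $1$-perfect code $C \subseteq S_n$ exists for some $n \in \{14, 15\}$. Then $|C| = (n-1)!$ and, as noted after the proof of Proposition~\ref{rel}, the identity
\[
\widehat{(S \cup \{1\})}\,\widehat{C} \;=\; \widehat{G}
\]
holds in $\mathbb{C}[S_n]$, where $S = \{(i,i+1) : 1 \leq i \leq n-1\}$.

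The plan is to fix an appropriate Young subgroup $H$ of $S_n$, let $\mathcal{Z} = \{Ha_1, \ldots, Ha_m\}$ be its set of right cosets, and apply the permutation representation $\rho_\mathcal{Z}^{S_n}$. Exactly as in the proof of Theorem~\ref{intprogram}, the above group-algebra identity becomes the linear system
\[
\widehat{(S \cup \{1\})^{\rho_\mathcal{Z}^{S_n}}}\,\mathbf{c} \;=\; |H|\,\mathbf{1},
\]
whose coordinates $c_i = |C \cap Ha_i|$ are non-negative integers summing to $(n-1)!$. In particular, $\mathbf{c}$ would be a feasible integer point of the program of Theorem~\ref{intprogram} attaining $\sum_i x_i = (n-1)!$. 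For each $n \in \{14, 15\}$ I would then feed this integer program into CPLEX, in the spirit of the preceding theorem, for a suitable partition $\lambda$, and verify that the optimum is strictly less than $(n-1)!$, contradicting the existence of $\mathbf{c}$.

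The main obstacle is the choice of $\lambda$: the coarse partition $(n-1,1)$ yields only the $n \times n$ system of Lemma~\ref{matrix}, and one checks directly that the uniform vector $\mathbf{x} = \big((n-1)!/n\big)\mathbf{1}$ is a non-negative integer solution of $M\mathbf{x} = (n-1)!\mathbf{1}$ whenever $n \mid (n-1)!$, as is the case for the composite values $n = 14$ and $n = 15$. Hence this shape cannot rule out a perfect code, and a finer $\lambda$ (for instance $(n-2,1,1)$ or $(n-2,2)$) must be used, introducing enough extra constraints to drop the integer-program optimum by at least one, while keeping the coset index $m = [S_n:H]$ small enough for CPLEX to solve. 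Balancing these two demands is the delicate point of the proof, and mirrors the bookkeeping behind the analogous CPLEX-based verifications already carried out for $n \in \{6,7,11,13,17\}$ in the preceding theorem.
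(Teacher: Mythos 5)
Your reduction is sound in principle, but the concrete plan has a genuine gap centred on the choice of $H$. Since every row and column of $\widehat{(S\cup\{1\})^{\rho_{\mathcal Z}^{S_n}}}$ sums to $|S\cup\{1\}|=n$, summing the constraints gives $n\sum_i x_i\leq [S_n:H]\,|H|=n!$, so the program of Theorem~\ref{intprogram} never exceeds $(n-1)!$, and it attains $(n-1)!$ exactly when the equality system $\widehat{(S\cup\{1\})^{\rho_{\mathcal Z}^{S_n}}}\mathbf{x}=|H|\mathbf{1}$ has a non-negative integer solution. The uniform vector $x_i=|H|/n$ always solves that equality system, so whenever $n\mid |H|$ the optimum is exactly $(n-1)!$ and no contradiction can be extracted. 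This is precisely the obstruction you yourself noted for the shape $(n-1,1)$, but your proposed remedies $(n-2,1,1)$ and $(n-2,2)$ suffer from it too: their Young subgroups have orders $12!$, $2\cdot 12!$ (for $n=14$) and $13!$, $2\cdot 13!$ (for $n=15$), all divisible by $n$. To kill the uniform solution you must choose $H$ with $n\nmid|H|$, which for Young subgroups forces all parts to avoid a prime factor of $n$: parts at most $6$ when $n=14$ and at most $4$ (apart from one $3$) when $n=15$. This creates the second difficulty: such subgroups have large index ($\binom{14}{6}\binom{8}{6}=84084$ for $(6,6,2)$, and about $1.6\times 10^7$ for $(4,4,4,3)$ in $S_{15}$), so the direct CPLEX run you envisage is not realistic, at least for $n=15$.

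The paper's proof of this theorem does not use the integer program at all. It applies the criterion of Edelman and White \cite[Theorem~2.2]{white}: if $n\nmid |H|$ and the matrix $\widehat{(S\cup\{1\})^{\rho_X^{S_n}}}$ is invertible, then no $1$-perfect code exists — invertibility forces the unique solution of the equality system to be the non-integral uniform vector, whereas a perfect code would supply an integral one (its coset-distribution vector). Concretely, the paper takes the Young subgroup of shape $(6,6,2)$ for $n=14$ and checks invertibility by software, and for $n=15$ it takes shape $(4,4,4,3)$ and, because the coset matrix is far too large, reduces via \cite[Corollary~2.2.22]{JK} to checking invertibility of $\widehat{(S\cup\{1\})^{\rho_\lambda}}$ on each irreducible constituent of that permutation representation. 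So the missing ingredients in your proposal are the divisibility condition $n\nmid|H|$ guiding the choice of shape, and (for $n=15$) the passage to irreducible representations that makes the verification computationally feasible.
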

\begin{proof}
Let $S=\{(i,i+1)\,|\,1\leq i\leq n-1\}$ and $T:=S\cup \{1\}$. We are using  techniques in \cite{white} for proving this theorem. By \cite[Theorem 2.2]{white}, if $S_n$ contains a subgroup $H$ such that $n\nmid |H|$ and $\widehat{(T)^{\rho_{X}^{S_{n}}}}$ is invertible, where $X$ is the set of  right cosets of $H$ in $S_n$, then  $S_n$ contains no  $1$-perfect codes under the kendall $\tau$-distance. In the case $n=14$, we consider   Young subgroup $H$  corresponding to the partition $(6,6,2)$. By a software check the matrix $\widehat{(T)^{\rho_{X}^{S_{14}}}}$ is invertible.\\ 
In the case $n=15$, we consider   Young subgroup $H$  corresponding to the partition
 $(4,4,4,3)$  in $S_{15}$.   By \cite[Corollary 2.2.22]{JK}, if for all \\ $\lambda\in\{(15),(14,1),(13,2),(13,1,1),(12,3),(12,2,1),(11,4),(8,7),\\(10,5),(11,2,2),(9,6),(11,3,1),(7,7,1),(5,5,5),(10,4,1),\\(10,3,2),(9,5,1),(8,6,1),(9,3,3),(9,2,2,2),(6,6,3),(9,4,2),\\(4,4,4,3),(7,6,2),(7,4,4),(6,5,4),(8,5,2),(8,4,3),(7,5,3),\\(6,3,3,3),(8,3,2,2),(5,4,4,2),(7,3,3,2),(5,5,3,2),(6,5,2,2),\\(7,4,2,2),(6,4,3,2)\}$, $\widehat{T^{\rho_{\lambda}}}$ is invertible, where $\rho_{\lambda}$ is the irreducible representation of $S_{15}$ corresponding to $\lambda$, then $\widehat{T^{\rho_{X}^{S_{15}}}}$ is invertible. By software check all these matrices are invertible. This completes the proof.
\end{proof}
\begin{con}
If $H$ is the Young subgroup corresponding to the partition $(p-1,p-1,2)$ of $S_{2p}$, where $p\geq 3$ is a prime number,  and $X$ is the set of  right cosets of $H$ in $S_{2p}$, then $\widehat{(S\cup \{1\})^{\rho_{X}^{S_{2p}}}}$ is invertible. In particular,  there is no $1$-perfect permutation code of length $2p$ with respect to the Kendall $\tau$-distance.
\end{con}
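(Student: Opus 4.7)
The plan is to prove that the matrix $A := \widehat{(S\cup\{1\})^{\rho_X^{S_{2p}}}}$ is invertible; the ``in particular'' statement is then automatic from \cite[Theorem 2.2]{white}, since $|H| = 2((p-1)!)^2$ and Wilson's theorem (used in the proof of Theorem \ref{systemineq}) gives $p \nmid (p-1)!$, whence $n = 2p \nmid |H|$.

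For invertibility I would follow the strategy already used for $n=15$: decompose the permutation module affording $\rho_X^{S_{2p}}$ into irreducible Specht modules via Young's rule
\[ M^{(p-1,p-1,2)} \;\cong\; \bigoplus_{\mu} K_{\mu,(p-1,p-1,2)}\, S^\mu, \]
where $K_{\mu,\lambda}$ is the Kostka number, nonzero exactly when the partition $\mu$ of $2p$ dominates $\lambda=(p-1,p-1,2)$. Since a semistandard Young tableau of content $(p-1,p-1,2)$ uses only three distinct labels, any such $\mu$ has at most three parts. So, as in the earlier proof, the task reduces to showing that $\widehat{(S\cup\{1\})^{\rho_\mu}}$ is invertible for every partition $\mu = (\mu_1,\mu_2,\mu_3)$ of $2p$ with $\mu_1 \geq p-1$ and $\mu_3 \leq 2$.

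For each such irreducible I would work in Young's seminormal form, where standard tableaux of shape $\mu$ furnish a basis, the Jucys--Murphy elements $J_k = \sum_{j<k}(j,k)$ act diagonally via the contents of the cell containing $k$, and each adjacent transposition $(k,k+1)$ acts through an explicit tridiagonal $2\times 2$ block. Writing $\widehat{S\cup\{1\}} = I + \sum_{k=1}^{2p-1}(k,k+1)$, one would try either to exhibit a dominant diagonal or positive-definite part of $\widehat{(S\cup\{1\})^{\rho_\mu}}$ that is bounded away from zero, or to compute $\det\widehat{(S\cup\{1\})^{\rho_\mu}}$ recursively by branching the Specht module along the removable corner cells of $\mu$ (using the branching rule $S^\mu\!\downarrow_{S_{2p-1}} = \bigoplus S^{\mu^-}$).

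The main obstacle is precisely this last step: $\widehat{S\cup\{1\}}$ is not central in $\mathbb{C}[S_{2p}]$, so its image under $\rho_\mu$ is not a scalar and its spectrum cannot be read off from character values alone. For small primes the invertibility can (and for $p=7$, already has) been verified by software via the preceding theorem; the real difficulty is producing a uniform argument across all primes $p\geq 3$, which will likely require a careful combinatorial analysis of the tridiagonal action of the adjacent transpositions on standard tableaux of three-row shapes $\mu\unrhd(p-1,p-1,2)$, together with a rigidity or non-cancellation argument that rules out any zero eigenvalue.
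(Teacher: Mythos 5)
There is a genuine gap here, and it is worth being explicit about its nature: the statement you were asked to prove is stated in the paper only as a \emph{conjecture} -- the authors give no proof of it, and your proposal does not supply one either. What you write is a sensible research plan rather than an argument. The parts that are solid are exactly the parts that are easy: the ``in particular'' clause does follow from the cited result of Edelman and White once invertibility is known, since $|H|=2\bigl((p-1)!\bigr)^2$ and $p\nmid (p-1)!$ (you do not even need Wilson's theorem for this -- every factor of $(p-1)!$ is smaller than the prime $p$), so $2p\nmid|H|$; and the reduction via Young's rule is correct: the Kostka number $K_{\mu,(p-1,p-1,2)}$ is nonzero precisely when $\mu\trianglerighteq(p-1,p-1,2)$, which forces $\mu$ to have at most three rows with $\mu_1\geq p-1$ and $\mu_3\leq 2$. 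This mirrors exactly the technique the paper uses for $n=15$ (via \cite[Corollary 2.2.22]{JK}), where the finitely many relevant irreducibles were then checked \emph{by software}.

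The missing content is the whole content of the conjecture: a proof, uniform in the prime $p$, that $\widehat{(S\cup\{1\})^{\rho_\mu}}$ is nonsingular for every three-row partition $\mu\trianglerighteq(p-1,p-1,2)$ of $2p$. Your proposal names two possible mechanisms (diagonal dominance or positive definiteness of the seminormal-form matrix, or a determinant recursion along the branching rule) but carries out neither, and neither is likely to work as stated: $\widehat{S\cup\{1\}}=1+\sum_k(k,k+1)$ is not central, is not a polynomial in a single Jucys--Murphy element, and its image in a Specht module is generally neither diagonally dominant nor definite (its eigenvalues need not even be nonnegative), while the branching rule decomposes the module but does not block-triangularize the action of all adjacent transpositions simultaneously, so it does not by itself yield a determinant recursion. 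You acknowledge this yourself in the final paragraph (``the real difficulty is producing a uniform argument''), which is an honest assessment but also a concession that the key eigenvalue-nonvanishing (rigidity/non-cancellation) step is absent. As it stands, the proposal reduces the conjecture to an equally open family of irreducible-level nonvanishing statements, which is the same position the paper's authors are in.
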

\section{Conclusion}
Due to the applications of PCs under the Kendall $\tau$-distance in flash memories, they have attracted the attention of many researchers. In this paper, we consider the upper bound of  the size of the largest PC with minimum Kendall
$\tau$-distance 3. Using Group theory, corresponding to any non-trivial subgroup of $S_n$, we formulate  an integer programming  where the optimal
value of the objective function  gives an upper bound on $P(n,3)$.  After that, by solving the integer programming corresponding to some subgroups of $S_n$, when $n\geq 7$ is a prime number or $n\in\{6,14,15\}$, we can improve the upper bound on $P(n,3)$.

\end{document}